\newtheorem{prop}{Proposition}
\newtheorem{theo}[prop]{Theorem}
\newtheorem{lemm}[prop]{Lemma}
\newtheorem{cor}[prop]{Corollary}
\theoremstyle{remark}
\newtheorem*{exam}{Example}
\newcommand{\wLambda}{\widetilde{\Lambda}}
\newcommand{\alg}{\mathrm{alg}}
\newcommand{\ra}{\rightarrow}
\newcommand{\Pic}{\mathrm{Pic}}
\newcommand{\bC}{\mathbb C}
\newcommand{\bP}{\mathbb P}
\newcommand{\bQ}{\mathbb Q}
\newcommand{\bR}{\mathbb R}
\newcommand{\bZ}{\mathbb Z}
\newcommand{\cC}{\mathcal C}
\newcommand{\cO}{\mathcal O}
\newcommand{\cX}{\mathcal X}
\newcommand{\cZ}{\mathcal Z}
\newcommand{\mfM}{\mathfrak M}
\author{Arend Bayer}
\address{University of Edinburgh, School of Mathematics and Maxwell Institute,
James Clerk Maxwell Building,
The King's Buildings,
Mayfield Road,
Edinburgh,
Scotland EH9 3JZ}
\email{arend.bayer@ed.ac.uk}
\author{Brendan Hassett}
\address{Department of Mathematics, Rice University, MS 136,
Houston, Texas 77251-1892, USA}
\email{hassett@rice.edu}
\author{Yuri Tschinkel}
\address{Courant Institute, New York University, New York, NY 11012, USA}
\address{Simons Foundation, 160 Fifth Avenue, New York, NY 10010, USA}
\email{tschinkel@cims.nyu.edu}
\title{Mori cones of holomorphic symplectic varieties of K3 type}
\begin{document}

\begin{abstract}
We determine the Mori cone of holomorphic symplectic varieties deformation
equivalent to the punctual Hilbert scheme on a K3 surface. Our description is given in terms of
Markman's extended Hodge lattice.
\end{abstract}


\maketitle

\section*{Introduction}

Let $X$ be an irreducible holomorphic symplectic manifold.
Let $\left(,\right)$ denote the Beauville-Bogomolov form on $H^2(X,\bZ)$;
we may embed $H^2(X,\bZ)$ in
$H_2(X,\bZ)$ via this form.  
Fix a polarization $h$ on $X$; by a fundamental result of Huybrechts \cite{HuyBasic},
$X$ is projective if it admits a divisor class $H$ with $\left(H,H\right)>0$. 
It is expected that finer birational properties 
of $X$ are also encoded by the Beauville-Bogomolov form and the Hodge structure
on $H^2(X)$, along with appropriate extension data.
In particular, natural cones appearing in the minimal 
model program---the moving cone, the nef cone, the pseudo-effective cone---should
have a description in terms of this form.

Now assume $X$ is deformation equivalent to 
the punctual Hilbert scheme $S^{[n]}$ of a K3 surface $S$ with $n>1$.  
Recall that 
\begin{equation} \label{eqn:H2}
H^2(S^{[n]},\bZ)_{\left(,\right)}=H^2(S,\bZ) \oplus_{\perp} \bZ \delta,
\quad \left(\delta,\delta\right)=-2(n-1)
\end{equation}
where the restriction of the Beauville-Bogomolov form to the first factor
is just the intersection form on $S$, and $2\delta$ is the class of
the locus of non-reduced subschemes. 
Recall from \cite{Kovacs} that for K3 surfaces $S$, the
cone of (pseudo-)effective divisors is the closed cone generated by
$$\{D \in \Pic(S): \left(D,D\right) \ge -2, \left(D,h\right)>0\}.$$
The first attempt to extend this to higher dimensions
was \cite{HT1}.  Further work on moving cones was presented in \cite{HT2,MarkMov},
which built on Markman's analysis of monodromy groups.  
The characterization of extremal rays arising from Lagrangian projective
spaces $\bP^n \hookrightarrow X$ has been addressed in \cite{HT2,HHT3}
and \cite{BJ}.  The paper \cite{HT4} proposed a general framework describing
all types of extremal rays; however, Markman found counterexamples in dimensions $\ge 10$,
presented in \cite{BM1}.  

The formalism of Bridgeland stability conditions \cite{Br1,Br2} has led to breakthroughs
in the birational geometry of moduli spaces of sheaves on surfaces.  The case of punctual Hilbert
schemes of $\bP^2$ and del Pezzo surfaces was investigated
by Arcara, Bertram, Coskun, and Huizenga \cite{ABCH,Hui, BC, CH}.
The effective cone on $(\bP^2)^{[n]}$ has a beautiful and complex structure as $n$
increases, which only becomes transparent in the language of stability conditions.  
Bayer and Macr{\`{\i}} resolved the case of punctual Hilbert schemes and
more general moduli spaces of sheaves on K3 surfaces \cite{BM1,BM2}.  Abelian surfaces,
whose moduli spaces of sheaves include generalized Kummer varieties, have been studied as well
\cite{YY,Yosh}.

In this note, we extend the results obtained for moduli spaces of sheaves over K3 surfaces
to all holomorphic symplectic manifolds arising as deformations of punctual Hilbert schemes of
K3 surfaces.  Our principal result is Theorem~\ref{theo:main} below, providing a description of the Mori
cone (and thus dually of the nef cone). 

In any given situation, this also leads to an effective method to determine the list of marked
minimal models (i.e., birational maps $f \colon X \dashrightarrow Y$ where $Y$ is also a
holomorphic symplectic manifold): the movable cone has been described by Markman \cite[Lemma
6.22]{MarkSurv}; by \cite{HT2}, it admits a wall-and-chamber decomposition whose walls are the orthogonal
complements of extremal curves on birational models, and whose closed chambers corresponds
one-to-one to marked minimal model, as the pull-backs of the corresponding nef cones.

\subsection*{Acknowledgments:}   
The first author was supported by NSF grant 1101377;
the second author was supported by NSF
grants 0901645, 0968349, and 1148609; the third  
author was supported by NSF grants 
0968318 and 1160859.
We are grateful to Emanuele Macr{\`{\i}} for helpful conversations, to 
Eyal Markman for constructive criticism and correspondence, to
Claire Voisin for helpful comments on deformation-theoretic arguments in a draft of this paper,
and to Ekatarina Amerik for discussions on holomorphic symplectic contractions.
We are indebted to the referees for their careful reading of our manuscript. 
The first author would also like to thank Giovanni Mongardi for discussions and a preliminary
version of \cite{Mon}. Related questions for general hyperk\"ahler manifolds have been treated
in \cite{VerAmerik}.

\section{Statement of Results}
\label{sect:statements}

Let $X$ be deformation equivalent to the Hilbert scheme of length-$n$
subschemes of a K3 surface. 
Markman \cite[Cor.~9.5]{MarkSurv} describes an extension of lattices 
$$H^2(X,\bZ) \subset \wLambda$$
and weight-two Hodge structures
$$H^2(X,\bC) \subset \wLambda_{\bC}$$
characterized as follows:  
\begin{itemize}
\item{the orthogonal complement of $H^2(X, \bZ)$ in $\wLambda$ has rank one, and is generated by
a primitive vector of square $2n-2$;}
\item{as a lattice
$$\wLambda \simeq U^4 \oplus (-E_8)^2$$
where $U$ is the hyperbolic lattice and $E_8$ is the positive definite
lattice associated with the corresponding Dynkin diagram;}
\item{any parallel transport operator $H^2(X, \cZ) \to H^2(X', \cZ)$ naturally lifts to a Hodge
isometry $\wLambda_X \to \wLambda_{X'}$; the induced action of the monodromy group on $\wLambda/H^2(X,\bZ)$
is encoded by a character $cov$ (see \cite[Sec.~4.1]{MarkJAG});}
\item{we have the following Torelli-type statement:  $X_1$ and $X_2$ are
birational if and only if there is Hodge isometry
$$\wLambda_1 \simeq \wLambda_2$$
taking $H^2(X_1,\bZ)$ isomorphically to $H^2(X_2,\bZ)$;}
\item{if $X$ is a moduli space $M_v(S)$ of sheaves (or of Bridgeland-stable
complexes) over a K3 surface $S$
with Mukai vector $v$ then there is an isomorphism from $\wLambda$ to the
Mukai lattice of $S$ taking 
$H^2(X,\bZ)$ to $v^{\perp}$.}
\end{itemize}
Generally, we use $v$ to denote a primitive generator for the orthogonal complement
of $H^2(X,\bZ)$ in $\wLambda$.  Note that $v^2=\left(v,v\right) = 2n-2$.
When $X \simeq M_v(S)$ we may take the Mukai vector $v$ as the generator.  

There is a canonical homomorphism
$$\theta^{\vee}\colon \wLambda \twoheadrightarrow H_2(X,\bZ)$$
which restricts to an inclusion
$$H^2(X,\bZ) \subset H_2(X,\bZ)$$
of finite index.  
By extension, it induces a $\bQ$-valued Beauville-Bogomolov form on
$H_2(X, \bZ)$.

Assume $X$ is projective.  
Let $H^2(X)_{alg} \subset H^2(X,\bZ)$ and $\wLambda_{\alg}\subset \wLambda$ denote
the algebraic classes, i.e., the integral classes of type $(1,1)$.  
The Beauville-Bogomolov form on $H^2(X)_{alg}$ has signature $(1,\rho(X)-1)$,
where $\rho(X)=\dim(H^2_{alg}(X)).$
The {\em Mori cone} of $X$ is defined as the closed cone in $H_2(X,\bR)_{alg}$
containing the classes of algebraic curves in $X$.  The {\em positive cone} (or more accurately,
non-negative cone) in
$H^2(X,\bR)_{alg}$ is the closure of the connected component of the cone
$$\{D \in H^2(X,\bR)_{alg}: D^2 > 0 \}$$
containing an ample class. 
The dual of the positive cone in $H^2(X,\bR)_{alg}$ is the 
positive cone.

\begin{theo} \label{theo:main}
Let $(X,h)$ be a polarized holomorphic symplectic manifold as above.
The Mori cone in $H_2(X,\bR)_{alg}$ is generated by classes
in the positive cone 
and the images under $\theta^{\vee}$ of the following:
$$\{a \in \wLambda_{\alg}: a^2 \ge -2, \left| \left(a,v\right)\right| \le v^2/2,
\left(h,\theta^\vee(a)\right)>0\}.$$
\end{theo}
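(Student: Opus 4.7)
The plan is to reduce to the case where $X = M_v(S)$ is a moduli space of Bridgeland-stable complexes on a K3 surface $S$, where the theorem follows from the wall-and-chamber analysis of Bayer--Macr{\`{\i}} \cite{BM1, BM2}, and then to transfer the conclusion to arbitrary deformations of $S^{[n]}$ via a deformation argument.

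\textbf{Base case.} When $X = M_v(S)$, the lattice $\wLambda$ identifies canonically with the Mukai lattice $H^*(S, \bZ)$ and $H^2(X, \bZ)$ with $v^\perp$; the algebraic part $\wLambda_{\alg}$ consists of $(1,1)$-Mukai classes. Bayer--Macr{\`{\i}}'s analysis of walls in the space of Bridgeland stability conditions shows that each wall in the movable cone of $M_v(S)$ is cut out by a rank-two primitive sublattice $L \subset \wLambda_{\alg}$ containing $v$, with the corresponding extremal curve class equal to $\theta^\vee(a)$ for the unique primitive $a \in L$ satisfying $a^2 \ge -2$ and $|\left(a, v\right)| \le v^2/2$. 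Conversely every such $a$ gives rise to a wall and hence to an extremal curve. Combined with the generation of the remaining rays of the Mori cone by classes in the positive cone, this proves Theorem \ref{theo:main} in this case.

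\textbf{Reduction to the base case.} For a general polarized pair $(X, h)$ and a class $a \in \wLambda_{\alg}$ satisfying the stated hypotheses, the idea is to construct a smooth family $\pi \colon \cX \to B$ of polarized holomorphic symplectic manifolds over a connected base, equipped with a flat extension of $\wLambda$ as a local system of Hodge lattices, such that the sublattice $\bZ\langle h, v, a\rangle$ stays of type $(1,1)$ on every fiber while some other fiber is isomorphic to $(M_w(S_0), h_0)$ for some K3 surface $S_0$. Such a family exists by a Noether--Lefschetz argument inside the moduli space of polarized holomorphic symplectic manifolds, using Verbitsky's Torelli theorem, Markman's description of monodromy (as quoted in the introduction), and the density of moduli-space periods in the relevant Noether--Lefschetz locus. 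Given this family, one transfers extremal rays in both directions: for the containment $\supseteq$, Bayer--Macr{\`{\i}}'s extremal curve on the moduli-space fiber is rational, and by standard deformation-of-curves theory (obstructions controlled by the holomorphic symplectic form) it spreads to an effective curve on $X$ representing the class $\theta^\vee(a)$; for the containment $\subseteq$, any extremal ray of the Mori cone on $X$ outside the positive cone is dual to a wall of the K\"ahler cone cut out by an algebraic class of square $\ge -2$, and parallel transport of this wall to $\cX_0$ forces it, via the base case, to have the asserted form.

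\textbf{Main obstacle.} The crux is the simultaneous control of both directions of the deformation transfer. Deforming extremal rational curves from the moduli-space fiber to a nearby fiber is standard once the relevant Hodge classes remain algebraic; the converse---that every extremal ray on $X$ really arises from such an $a$---depends essentially on working with $\wLambda$ rather than $H^2(X, \bZ)$ alone, since parallel transport acts on $\wLambda$ by Hodge isometries and only this extended lattice carries enough data to pin down the extremal curve class across the family. A secondary hurdle is the existence of the Noether--Lefschetz family with both a prescribed algebraic sublattice and a moduli-space fiber; this requires a density/openness argument in the appropriate period domain, using the $cov$ character to describe monodromy orbits on $\wLambda/H^2(X,\bZ)$ and ensuring that the chosen component of moduli meets the locus of moduli spaces of stable complexes.
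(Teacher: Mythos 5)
Your overall strategy---reduce to the Bayer--Macr\`{\i} theorem for moduli spaces of stable objects by deforming inside a Noether--Lefschetz locus where $a$, $v$, $h$ stay algebraic, then transfer curve classes in both directions---is the same as the paper's, but there is a genuine gap at the transfer step. Parallel transport within such a family does \emph{not} preserve extremality, effectivity, or the walls of the nef cone: when you arrive at the moduli-space fiber, the class $\theta^\vee(\phi(a))$ may fail to be extremal there (it could lie in the interior of the Mori cone, or neither it nor its negative could be effective), and conversely the transport of an extremal ray of $X$ need not be extremal on the Hilbert-scheme fiber. This matters because the deformation theory of the rational curve is \emph{not} standard for an arbitrary rational curve of negative square: the paper's Proposition~\ref{prop:deform} needs the curve to be extremal, so that the structure theory of the associated contraction (Wierzba, Kaledin, Namikawa, Cho--Miyaoka--Shepherd-Barron) yields $N_\xi \simeq \cO(-2)\oplus\cO(-1)^{r-1}\oplus\cO^{2(n-r)}\oplus\cO(1)^{r-1}$ and hence $h^1(N_\xi)=1$. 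Voisin's example in the paper (a $(1,1)$-curve on a Lagrangian $\bP^1\times\bP^1\subset S^{[2]}$, which survives only in codimension two) shows that your appeal to ``obstructions controlled by the holomorphic symplectic form'' fails without extremality.

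The missing ingredient is the paper's Corollary~\ref{corweylgroup}, which rests on Markman's theorem that the intersection of the movable cone with the positive cone is a fundamental domain for the Weyl group $W_{\mathrm{Exc}}$ of reflections in exceptional divisors, combined with the chamber decomposition of the movable cone by nef cones of marked birational models. Given any algebraic $R$ with $\left(R,R\right)<0$, one first applies a Weyl-group element to make $R^\perp$ meet the movable cone in full dimension, then selects a birational model $X''$ whose nef cone meets $\psi(R)^\perp$ in full dimension on the positive side; on $X''$ the class $\psi(R)$ is either genuinely extremal or provably not in the Mori cone (in either sign), and a K\"ahler class meets it positively. Only after this normalization can Proposition~\ref{prop:deform} be run in both directions: from $X$ to $X''$ to show an extremal ray of $X$ becomes extremal on a moduli space (hence of the stated numerical form by \cite[Theorem 12.2]{BM2}), and from $X''$ back to $X$ to show that a class satisfying the numerical conditions is effective on $X$. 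Your proposal flags the two-way transfer as the ``main obstacle'' but does not supply this mechanism, and the secondary hurdle you identify (existence of the Noether--Lefschetz family) is actually the easier part, handled by Markman's connectedness results for the marked moduli space $\mfM^\circ_R$.
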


This generalizes \cite[Theorem 12.2]{BM2}, which treated the case of moduli spaces of sheaves on K3
surfaces.
This allows us to compute the full nef cone of $X$ from its Hodge structure
once a single ample divisor is given.  
As another application of our methods, we can bound the length of extremal rays of the Mori cone with respect to Beauville-Bogomolov pairing:
\begin{prop} \label{prop:length}
Let $X$ be a projective holomorphic symplectic manifold as above.
Then any extremal ray of its Mori cone contains an effective curve class $R$ with
\[ 
(R, R) \ge -\frac{n+3}2. \]
\end{prop}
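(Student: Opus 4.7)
The plan is to extract the bound directly from Theorem~\ref{theo:main}.  An extremal ray of a closed convex cone must be spanned by one of the cone's generators.  Theorem~\ref{theo:main} supplies two kinds of generators for the Mori cone: classes lying in the positive cone, and images $\theta^\vee(a)$ of elements $a \in \wLambda_{\alg}$ satisfying $a^2 \ge -2$, $|(a,v)| \le v^2/2 = n-1$, and $(h,\theta^\vee(a))>0$.  If the extremal ray is of the first kind, any effective representative $R$ satisfies $(R,R) \ge 0$ and the claim is immediate, so I would reduce to the case where the ray is spanned by some $R = \theta^\vee(a)$ of the second kind.

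In this case I would exploit the extension structure of $\wLambda$ to compute $(R,R)$ in terms of $a^2$ and $(a,v)$.  Since $\wLambda$ has rank one more than $H_2(X,\bZ)$ and $\theta^\vee$ is injective on $v^\perp = H^2(X,\bZ)$, the kernel of $\theta^\vee\otimes\bQ$ is the line $\bQ\cdot v$.  Writing $a = w + \frac{(a,v)}{v^2}\, v$ with $w \in v^\perp \otimes \bQ$, we have $R = \theta^\vee(w)$, and by definition of the induced $\bQ$-valued Beauville--Bogomolov form on $H_2(X)$,
\[
(R,R) = (w,w) = a^2 - \frac{(a,v)^2}{v^2}.
\]
Substituting the bounds $a^2 \ge -2$, $(a,v)^2 \le (n-1)^2$, and $v^2 = 2n-2$ then yields
\[
(R,R) \ge -2 - \frac{(n-1)^2}{2n-2} = -\frac{n+3}{2}.
\]

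No substantive obstacle should arise here: the inequality is an arithmetic consequence of the worst-case choice of parameters allowed by Theorem~\ref{theo:main}.  The only conceptual input is the identification of the kernel of $\theta^\vee\otimes\bQ$ with $\bQ\cdot v$, which converts the $\wLambda$-square of $a$ into the $H_2(X)$-square of $R$ after subtracting the $v$-component.  Effectivity of some representative on the ray follows because the Mori cone is the closed cone of effective curve classes, so a suitable integer multiple of $\theta^\vee(a)$ (or a nearby effective approximation) realizes the bound.
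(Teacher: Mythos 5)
Your lattice computation is correct and is in fact exactly where the number $-\frac{n+3}{2}$ comes from: writing $a = w + \frac{(a,v)}{v^2}v$ with $w \in v^\perp\otimes\bQ$ gives $(\theta^\vee(a),\theta^\vee(a)) = a^2 - \frac{(a,v)^2}{v^2} \ge -2 - \frac{n-1}{2} = -\frac{n+3}{2}$, and this is the calculation behind \cite[Proposition 12.6]{BM2}. But there is a genuine gap in your last step, and it is precisely the point the proposition is about. The claim is that the extremal ray contains an \emph{effective} curve class satisfying the bound. Your fallback --- ``a suitable integer multiple of $\theta^\vee(a)$ realizes the bound'' --- fails: if only $m\,\theta^\vee(a)$ with $m \ge 2$ were effective, its square would be $m^2(\theta^\vee(a))^2$, which for a negative-square class is \emph{more} negative and need not satisfy $\ge -\frac{n+3}{2}$. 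Note that Proposition~\ref{prop:deform}, which underlies the effectivity statements in the proof of Theorem~\ref{theo:main}, only guarantees that \emph{a multiple} of the transported class is effective (represented by a cycle of rational curves, possibly reducible or non-reduced). So Theorem~\ref{theo:main} by itself pins down the extremal rays but not the minimal effective class on each ray, and your argument cannot close this loop.

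The paper's proof handles exactly this issue by a different route: it deforms $X$ to a moduli space of sheaves $\cX_{b_0}$ on a K3 surface where, by \cite[Proposition 12.6]{BM2}, the class $R_0$ on the extremal ray with $(R_0,R_0) \ge -\frac{n+3}{2}$ is represented by an honest rational curve, namely a minimal free curve in a generic fibre of the associated extremal contraction. For such a curve the normal bundle computation in the proof of Proposition~\ref{prop:deform} gives $h^1(N_\xi)=1$, so the curve itself (not a multiple) deforms along the locus where $R_0$ stays algebraic, and hence $R_0$ itself is effective on $X$. If you want to salvage your approach, you would need to supply this input: an argument that the specific class $\theta^\vee(a)$ achieving the bound, and not merely its ray, is effective on $X$.
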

The value $-\frac{n+3}2$ had been conjectured in \cite{HT4}.
Proposition \ref{prop:length} has been obtained independently by Mongardi \cite{Mon}. His proof is
based on twistor deformations, and also applies to non-projective manifolds.

\section{Deforming extremal rational curves}
In this section, we consider general irreducible holomorphic symplectic manifolds, not necessarily of K3 type.  
Our arguments are based on the deformation theory of rational curves on holomorphic symplectic
manifolds, as first studied in \cite{Ran}.
Recall the definition of a {\em parallel transport operator} $\phi:H^2(X,\bZ) \ra H^2(X',\bZ)$ between
manifolds of a fixed deformation type:  
there is a smooth proper family $\pi:\cX \ra B$ over a connected
analytic space, points $b,b'\in B$ with $\cX_b:=\pi^{-1}(b)\simeq X$ and $\cX_{b'}\simeq X'$,
and a continuous path $\gamma:[0,1]\ra B,\gamma(0)=b,\gamma(1)=b'$, such that parallel transport
along $\gamma$ induces $\phi$. 

\begin{prop} \label{prop:deform}
Let $X$ be a projective holomorphic symplectic variety and $R$ the class of an extremal rational
curve $\bP^1 \subset X$ with $\left(R,R\right)<0$.  Suppose that $X'$ is deformation
equivalent to $X$ and $\phi:H^2(X,\bZ) \ra H^2(X',\bZ)$ is a parallel transport
operator associated with some family.
If $R':=\phi(R)$ is a Hodge class, and if there exists a K\"ahler class $\kappa$ on $X'$ with
$\kappa.R' > 0$, then a multiple of $R'$ is effective and
represented by a cycle of rational curves.
\end{prop}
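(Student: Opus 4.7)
The strategy is to deform the rational curve $\bP^1 \subset X$ to $X'$ along a family in which the class $R$ remains of Hodge type, and then appeal to the deformation theory of rational curves on holomorphic symplectic manifolds from \cite{Ran}. Concretely, the periods of $X$ and $X'$ both lie in the sub-period domain
\[ \Omega_{R^{\perp}} = \{[\omega] \in \Omega : (\omega, R) = 0\}. \]
Since $(R, R) < 0$, the lattice $R^{\perp}$ has signature $(3, b_2(X) - 4)$, and $\Omega_{R^{\perp}}$ is connected. Using the surjectivity of the period map together with the global Torelli theorem (Verbitsky, Markman), I would lift a path from the period of $X$ to that of $X'$ in $\Omega_{R^{\perp}}$ to a smooth proper family $\pi \colon \cX \to \Delta$ with $\cX_0 \cong X$, $\cX_1 \cong X'$, inducing the given parallel transport $\phi$, and with $R$ of Hodge type in every fiber.

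Along this family I would apply Ran's result that deformations of a rational curve on a holomorphic symplectic manifold are unobstructed along the Hodge locus of its class, of expected dimension $2n-2$ per fiber. Consequently the relative moduli space of effective $1$-cycles of class $R$ carries a component $\cH_0$ through $[\bP^1 \subset X]$ of dimension at least $(2n - 2) + 1$. On each fiber, such cycles sweep out at most a $(2n - 2)$-dimensional family, forcing the map $\cH_0 \to \Delta$ to be surjective for dimension reasons. By properness of this moduli space the fiber over $1 \in \Delta$ is nonempty, yielding an effective $1$-cycle on $X'$ of class $R' = \phi(R)$ whose components are rational (specialization limits of rational curves are sums of rational curves via stable-map compactifications). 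The K\"ahler positivity $\kappa \cdot R' > 0$ is consistent with effectivity and excludes the degenerate possibility that the limiting cycle has class $0$.

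The delicate step is the first: the marked moduli space of holomorphic symplectic manifolds is non-Hausdorff and the period map is only a local isomorphism on connected components. Realizing an arbitrary path in $\Omega_{R^{\perp}}$ as a genuine family with the prescribed markings and parallel transport requires exchanging $\cX_t$ for a birationally equivalent model whenever crossing an inseparable point of the period map, via Huybrechts' analysis of its fibers. The curve-deformation step is then a direct application of \cite{Ran} combined with standard properness arguments.
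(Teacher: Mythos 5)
There is a genuine gap at the heart of your curve-deformation step: you apply Ran's unobstructedness to the given extremal rational curve $\bP^1 \subset X$ itself, asserting that the relative moduli space of cycles over the Hodge locus of $R$ has a component of dimension at least $(2n-2)+1$. This is false for a general rational curve of negative square. Ran's persistence statement \cite[Cor.~3.3]{Ran} requires $h^1(N_\xi) = 1$, so that the single Hodge-theoretic obstruction (keeping $R$ of type $(1,1)$) accounts for all obstructions to deforming the curve sideways; without that control the relative deformation space can fail to dominate the Hodge locus. The paper records exactly such a counterexample (due to Voisin): for $X = S^{[2]}$ with $S$ a double cover of $\bP^1 \times \bP^1$, a bidegree-$(1,1)$ curve $C$ on the Lagrangian $\bP^1 \times \bP^1 \subset X$ has $N_{C/X} \simeq \cO_{\bP^1}(2) \oplus \cO_{\bP^1}(-2)^2$, hence $h^1 = 2$, and $C$ persists only on a codimension-\emph{two} locus, strictly smaller than the Hodge locus of its class. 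The missing idea is that extremality must be used geometrically: one replaces the given curve by a minimal-degree rational curve $\xi \colon \bP^1 \to F$ through the generic point of a fiber $F$ of the birational contraction associated to $R$; the structure theory of symplectic contractions (Kaledin, Namikawa, Wierzba) together with bend-and-break \cite{CMSB} identifies the normalization of $F$ with $\bP^r$ and forces $N_\xi \simeq \cO_{\bP^1}(-2)\oplus\cO_{\bP^1}(-1)^{r-1}\oplus\cO_{\bP^1}^{2(n-r)}\oplus\cO_{\bP^1}(1)^{r-1}$, whence $h^1(N_\xi)=1$ and Ran's results apply. This is also precisely why the conclusion asserts only that a \emph{multiple} of $R'$ is effective.

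Your first step (constructing the connecting family) is in the right spirit but also underpowered as stated: connectedness of the sub-period domain and surjectivity of the period map do not by themselves produce a single irreducible family having $X'$ as a fiber with the prescribed parallel transport, because a period point on $R^\perp$ is shared by several inseparable birational models. The paper resolves this via Markman's space $\mfM^\circ_R$ of marked manifolds on which $R$ is of type $(1,1)$ \emph{and} pairs positively with some K\"ahler class: this is where the hypothesis $\kappa.R'>0$ is actually consumed (it places $X'$ in $\mfM^\circ_R$, whose generic period fibers are single points and which is path-connected), rather than merely ruling out a zero limit cycle at the end. With these two repairs your outline would match the paper's proof; as written, the key deformation step rests on an application of \cite{Ran} that the extremality hypothesis alone does not license.
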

Note that $X'$ need not be projective here.  
\begin{proof}
Fix a proper holomorphic family $\pi:\cX \ra B$ over an irreducible
analytic space $B$ with $X=\cX_b$.
We claim there exists a rational curve $\xi: \bP^1\ra X$ 
with class $[\xi(\bP^1)]\in \bQ_{\ge 0}R$ satisfying the following property:
for each $b''$ near $b$ such that $R$ remains algebraic there exists a 
deformation $\xi_{b''}:\bP^1 \ra \cX_{b''}$ of $\xi$.

Let $\omega$ denote the holomorphic symplectic form on $X$, $f:X\ra Y$ the birational contraction associated
with $R$, $E$ an irreducible component of the
exceptional locus of $f$, $Z$ its image in $Y$, and $F$ a generic fiber of $E\ra Z$. 
We recall structural results about the contraction $f$:
\begin{itemize}
\item{$\omega$ restricts to zero on $F$ \cite[Lemma~2.7]{Kaledin};}
\item{the smooth locus of $Z$ is symplectic with two-form pulling back
        to $\omega|E$ \cite[Thm.~2.5]{Kaledin} \cite[Prop.~1.6]{Namikawa};}
\item{the dimension $r$ of $F$ equals the codimension of $E$ \cite[Thm.~1.2]{Wierzba}.}
\end{itemize}
Second, we review general results about rational curves $\xi:\bP^1 \ra X$:
\begin{itemize}
\item{a non-constant morphism $\xi:\bP^1 \ra X$ deforms in at least a $(2n+1)$-dimensional family \cite[Cor.~5.1]{Ran};}
\item{the fibers of $E\ra Z$ are rationally chain connected \cite[Cor.~1.6]{HacMcK};}
\item{a non-constant morphism $\xi:\bP^1 \ra F$ deforms in at least a $(2r+1)$-dimensional family \cite[Thm.~1.2]{Wierzba}.}
\end{itemize}

Let $\xi:\bP^1
\to F\subset X$ be a rational curve of minimal degree passing through the generic point of $F$.  
We do not assume {\em a priori} that $F$ is smooth.  The normal bundle $N_{\xi}$
was determined completely in \cite[\S 9]{CMSB}, which gives a precise
classification of $F$.  The fact that rational curves in $F$ deform in $(2r-2)$-dimensional families 
implies that every rational curve through the generic point of $F$ is doubly dominant,
i.e., it passes through {\em two} generic points of $F$.
Using a bend-and-break argument \cite[Thm.~2.8 and 4.2]{CMSB}, we may conclude that the normalization
of $F$ is isomorphic to $\bP^r$.  Note that the generic $\xi:\bP^1 \to F$ is immersed in $X$
by \cite[\S 3]{Kebekus}.  

Using standard exact sequences for normal bundles
and the fact that 
$\xi:\bP^1 \to F$ is immersed in $X$, 
one sees that (cf.~\cite[Lemma 9.4]{CMSB}) 
$$N_{\xi}
        \simeq \cO_{\bP^1}(-2) \oplus \cO_{\bP^1}(-1)^{r-1} \oplus \cO_{\bP^1}^{2(n-r)} \oplus \cO_{\bP^1}(1)^{r-1}.$$

The crucial point is that $h^1(N_{\xi})=1$.  Thus we may
apply \cite[Cor.~3.2]{Ran} to deduce that 
the deformation space of $\xi(\bP^1)\subset X$ has dimension $2n-2$;
\cite[Cor.~3.3]{Ran} then implies that $\xi(\bP^1)$ persists in deformations of $X$ for which $R$ remains a Hodge class.
This proves our claim.  

\begin{exam}
The extremality assumption is essential, as shown by an example suggested by Voisin:  Let $S$ be a K3 surface
arising as a double cover of $\bP^1\times \bP^1$ branched over a curve of bidegree $(4,4)$ and $X=S^{[2]}$.
We may regard $\bP^1 \times \bP^1 \subset X$ as a Lagrangian surface.  Consider a smooth curve
$C \subset \bP^1 \times \bP^1 \subset X$
of bidegree $(1,1)$.  The curve $C$ persists only in the codimension-{\em two} subspace of the deformation space of $X$
where $\bP^1 \times \bP^1$ deforms (see \cite{Voisin});
note that $N_{C/X} \simeq \cO_{\bP^1}(2) \oplus \cO_{\bP^1}(-2)^2$.
\end{exam}

We return to the proof of Proposition~\ref{prop:deform}.  
Consider the relative Douady space parametrizing rational curves of class $[\xi(\bP^1)]$
in fibers of $\cX \ra B$ and their specializations.  
Remmert's Proper Mapping theorem \cite[Satz~23]{Remmert}
implies that its image $B_R \subset B$ is proper and that over each $b' \in B_R$ there exists a 
cycle of rational curves in $\cX_b'$ that is a specialization
of $\xi_{b''}(\bP^1)$.    

To prove the Proposition~\ref{prop:deform}, we need to produce a family $\varpi:\cX^+ \ra B^+$ over an 
irreducible base, with both $X$ and $X'$
as fibers, such that $X'$ lies over a point of $B^+_R$ and $R'=\phi(R)$ coincides
with $\phi^+(R)$.  Here $\phi^+$ is the parallel transport mapping associated with $\varpi$.
Then the Proper Mapping theorem would guarantee that $R'$ is in the Mori cone of $X'$.

\begin{lemm} \label{lemm:family}
Let $X, X', R$ be as in Proposition \ref{prop:deform}.
There exists a smooth proper family $\varpi\colon \cX^+ \ra B^+$ over an irreducible analytic space,
points $b,b' \in B^+$ with $\cX^+_b\simeq X$ and $\cX^+_{b'}\simeq X'$, and a section
$$\rho\colon B^+ \ra \bR^2\varpi_*\bZ$$
of type $(1,1)$, such that $\rho(b)=R$ and $\rho(b')=R'$.
\end{lemm}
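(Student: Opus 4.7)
My plan is to build $\varpi\colon \cX^+\to B^+$ by lifting a path in the Hodge sub-locus of the period domain to a family of marked holomorphic symplectic manifolds along which $R$ remains of type $(1,1)$; this will use Beauville's local Torelli theorem together with Huybrechts' surjectivity of the period map.

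First I set $\Lambda:=H^2(X,\bZ)$ with its Beauville--Bogomolov form and let $\Omega\subset \bP(\Lambda\otimes\bC)$ be the period domain. The parallel transport $\phi$ defines a marking $\eta':=\phi^{-1}$ of $X'$; because $\phi$ arises from a family, the marked pairs $(X,\mathrm{id})$ and $(X',\eta')$ lie in a common connected component of the marked moduli space. Let $\lambda\in\Lambda\otimes\bQ$ be the class corresponding to $R$ under the Beauville--Bogomolov identification $H^2(X,\bQ)\hookrightarrow H_2(X,\bQ)$; since $(R,R)<0$ we have $\lambda^2<0$, so $\Omega_\lambda:=\{[\omega]\in\Omega:(\omega,\lambda)=0\}$ is itself a period domain (for the signature-$(3,b_2-4)$ lattice $\lambda^\perp$) and hence connected. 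The hypotheses that $R$ and $R'$ are Hodge classes translate exactly into the statement that the periods of $(X,\mathrm{id})$ and $(X',\eta')$ both lie in $\Omega_\lambda$.

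Next I choose an analytic path $\sigma$ in $\Omega_\lambda$ joining the two periods. By local Torelli, for any marked HK manifold $(Y,\eta_Y)$ the period map from its Kuranishi space $\mathrm{Def}(Y)$ to $\Omega$ is a local isomorphism, and restricts to a local isomorphism from the Hodge hypersurface $\mathrm{Def}(Y)_R\subset\mathrm{Def}(Y)$---where $\eta_Y^{-1}(\lambda)$ remains of type $(1,1)$---onto an open neighborhood in $\Omega_\lambda$. Combining this with surjectivity of the period map on the connected component of marked moduli containing $(X,\mathrm{id})$, I can cover the image of $\sigma$ by finitely many Kuranishi Hodge charts centered at intermediate marked pairs $(Y_i,\eta_{Y_i})$, with $Y_0=X$ and $Y_N=X'$, and glue the associated analytic families on their overlaps. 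The result is $\varpi\colon\cX^+\to B^+$ with $B^+$ an irreducible analytic space and $X,X'$ as fibers over chosen points $b,b'$; parallel transport of $R$ along $B^+$ then furnishes the section $\rho$ of $R^2\varpi_*\bZ$ of type $(1,1)$, with $\rho(b)=R$ and $\rho(b')=R'$.

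The hard part will be the gluing, since the marked moduli space of HK manifolds is notoriously non-Hausdorff: by global Torelli (Verbitsky, Markman), inseparable points correspond to birational models, so a naive path lift might end at a birational model $X''$ of $X'$ rather than at $X'$ itself. To force $\cX^+_{b'}\simeq X'$, I would center the final Kuranishi chart at $(X',\eta')$ itself and verify compatibility with the previous chart on the overlap: the two neighboring charts share period points and, by construction, compatible markings, so after shrinking the charts if necessary the gluing is well-defined. The assumption that $\phi$ is an honest parallel transport operator (and not merely some Hodge isometry) is exactly what keeps both endpoints inside a single connected component of marked moduli, which is what makes this compatible choice possible.
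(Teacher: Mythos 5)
Your overall strategy (join the two periods inside the hyperplane $R^\perp$ of the period domain, cover by Kuranishi charts, glue along a contractible base) is the same as the paper's, which follows Markman's Proposition 5.12 in \cite{MarkMov}. But there is a genuine gap at exactly the point you flag as ``the hard part,'' and your proposed fix does not work. The problem is not confined to the endpoint $b'$: over \emph{every} period $\tau$ in $R^\perp$ the fiber of the period map may contain several pairwise inseparable marked manifolds (all bimeromorphic to one another), and two Kuranishi families centered at inseparable points agree only over a dense open subset of their overlap --- over the complement their fibers are bimeromorphic but not isomorphic, so no amount of shrinking makes the gluing ``well-defined''; that is precisely what non-separatedness of $\mfM$ means. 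Observing that the two neighboring charts ``share period points and compatible markings'' gives you nothing, because inseparable points have identical periods and markings. Centering the last chart at $(X',\eta')$ therefore does not force the glued family to have fiber $X'$ at $b'$, and your argument as written would equally ``prove'' that the family ends at an arbitrary bimeromorphic model of $X'$, which is false.

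The ingredient you are missing is the hypothesis that $X'$ carries a K\"ahler class $\kappa$ with $\kappa.R'>0$, which appears in Proposition \ref{prop:deform} but is never used in your proof. The paper works not with a path in the period domain but with a path in the locus $\mfM^{\circ}_R$ of the marked moduli space where $R$ is of type $(1,1)$ \emph{and} pairs positively with some K\"ahler class. Markman shows (\cite[Cor.~5.10]{MarkMov}) that over a general period in $R^\perp$ --- one for which $R$ is the only integral $(1,1)$-class --- this K\"ahler condition singles out a \emph{unique} marked manifold, since any other bimeromorphic model fails $\kappa.R>0$; this is the mechanism that selects a consistent branch of the otherwise multivalued inverse period map. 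He further shows (\cite[Cor.~5.11]{MarkMov}) that $\mfM^{\circ}_R$ is path-connected, using only $(R,R)<0$. One then chooses the path $\gamma$ directly in $\mfM^{\circ}_R$ from $(X,\mathrm{id})$ to $(X',\eta')$, takes $B^+$ to be a neighborhood of $\gamma([0,1])$ deformation retracting onto it, and the gluing obstruction becomes purely topological and vanishes by contractibility. To repair your proof you would need to replace ``choose a path in $\Omega_\lambda$ and lift it'' by ``choose a path in $\mfM^{\circ}_R$,'' and to justify that both $(X,\mathrm{id})$ and $(X',\eta')$ lie in $\mfM^{\circ}_R$ --- which is exactly where the K\"ahler positivity hypothesis enters.
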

\begin{proof}
This proof is essentially the same as the argument for Proposition 5.12 of \cite{MarkMov}.  We summarize the key points.

Let $\mfM$ denote the moduli space of marked holomorphic symplectic manifolds of
K3 type \cite[Sec.~1]{HuyBasic}.  Essentially, this is obtained by gluing together all the
local Kuranishi spaces of the relevant manifolds.  It is non-Hausdorff.
Let $\mfM^{\circ}$ denote a connected component of $\mfM$ containing $X$ equipped with a suitable marking.

Consider the subspace $\mfM^{\circ}_R$ such that $R$ is type $(1,1)$ and $\kappa. R>0$
for {\em some} K\"ahler class, which may vary from point to point of the moduli space.  
This coincides with an open subset of the 
preimage of the hyperplane $R^{\perp}$ under the period map $P$ \cite[Claim~5.9]{MarkMov}.  
Furthermore, for general periods $\tau$---those for which $R$ is the unique integral class
of type $(1,1)$---the preimage $P^{-1}(\tau)$ consists of a single marked manifold
\cite[Cor.~5.10]{MarkMov}.  The proof of this in \cite{MarkMov} only requires that $\left(R,R\right)<0$.  
(The Torelli Theorem implies two manifolds share the same period point only if they are bimeromorphic \cite[Th.~1.2]{MarkSurv},
but if $R$ is the only algebraic class, the only other bimeromorphic model would not admit a
K\"ahler class $\kappa'$ with $\kappa'.R > 0$.)
Finally, $\mfM^{\circ}_R$ is path-connected by \cite[Cor.~5.11]{MarkMov}.  

Choose a path $\gamma:[0,1] \ra \mfM^{\circ}_R$ joining $X$ and $X'$ equipped with suitable markings, taking $R$ and $R'$ to the 
distinguished element $R$ in the reference lattice.  Cover the image with a finite number of small connected neighborhoods $U_i$ admitting
Kuranishi families.  We claim there exists an analytic space $B^+$
$$\gamma([0,1]) \subset B^+ \subset \cup_{i=1}^m U_i$$
with a universal family.  Indeed, we choose $B^+$ to be an open neighborhood 
of $\gamma([0,1])$ admitting a deformation
retract onto the path, but small enough so it is contained in the union of the $U_i$'s.
The topological triviality of $B^+$ means there is no obstruction to gluing local families.  
\end{proof}
This completes the proof of Proposition~\ref{prop:deform}. 
\end{proof}

\section{Proof of Theorem \ref{theo:main}}

In the case where $X = M_v(S)$ is a smooth moduli space of Gieseker-stable sheaves (or, indeed, of
Bridgeland-stable objects) on a K3 surface $S$, the statement is proven in \cite[Theorem 12.2]{BM2}. 
We will prove Theorem \ref{theo:main} by reduction to this case. 

The key argument is based on important results of Markman on the cone of movable divisors and its
relation to the monodromy group. Let $\cC_{\mathrm{mov}}^o$ be the intersection of the movable
cone with the positive cone in $H^2(X, \bR)_{\mathrm{alg}}$. Each wall of the movable cone
corresponds to a divisorial contraction of an irreducible exceptional divisor $E$ on some
birational model of $X$; the wall is contained
in the orthogonal complement $E^\perp$ of $E$ with respect to the Beauville-Bogomolov form. 

\begin{theo}[Markman]
\label{thm:Weylgroup}
\begin{enumerate}
\item
Let $X$ be an irreducible holomorphic symplectic manifold. 
Consider the reflection $\rho_E \colon H^2(X, \bR) \to H^2(X, \bR)$ that leaves $E^\perp$ fixed and sends
$E$ to $-E$. Then $\rho_E$ is defined over $\bZ$, acts by a monodromy transformation, and
extends to a Hodge isometry of the extended lattice $H^2(X) \subset \wLambda$.
\item
Let $W_{\mathrm{Exc}}$ be the Weyl group generated by reflections $\rho_E$ for all irreducible exceptional
divisors $E$ on all marked birational models of $X$. Then
$\cC_{\mathrm{mov}}^o$ is a fundamental domain of the action of $W_{\mathrm{Exc}}$ on the positive cone.
\end{enumerate}
\end{theo}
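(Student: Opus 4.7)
The plan is to prove part (1) first by a case analysis of prime exceptional divisors, then derive part (2) by a standard Weyl-chamber argument.

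For part (1), I would argue as follows. An exceptional divisor $E$ on some birational model is prime and irreducible, so $(E,E)<0$ by the Hodge index theorem applied in the negative part of $H^2$. To show $\rho_E$ is integral, the only nontrivial point is that $\frac{2(D,E)}{(E,E)} \in \bZ$ for every $D \in H^2(X,\bZ)$. This requires information about the square $(E,E)$ and the divisibility $\dv(E)$ of $E$ in $H^2(X,\bZ)$. I would invoke the classification of prime exceptional divisors on manifolds of K3 type (due to Markman, with input from Druel, Bayer-Macr\`{\i}, and Mongardi): up to monodromy and birational equivalence, the pair $((E,E),\dv(E))$ belongs to a short list (for $S^{[n]}$: $(-2,1)$ corresponding to Hilbert-Chow type contractions, $(-2(n-1),n-1)$ or $(-2(n-1),2n-2)$ coming from Brill-Noether or Li-Gieseker-Uhlenbeck type walls, etc.). In each case the divisibility is arranged so that $2(D,E)/(E,E)$ is integral, giving integrality of $\rho_E$.

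To upgrade $\rho_E$ from an integral isometry to a monodromy operator, I would use deformation invariance. Deform $(X,E)$ to a pair $(M_v(S),E_0)$ where $M_v(S)$ is a moduli space of Bridgeland-stable objects on some K3 surface $S$, and $E_0$ is the exceptional divisor of a classical birational contraction for which the reflection $\rho_{E_0}$ is known to be a monodromy operator: on the Mukai lattice side, $\rho_{E_0}$ is the extension of the standard Picard-Lefschetz reflection to $v^\perp$, and it lies in the image of the monodromy representation by \cite{MarkJAG}. Parallel transport through the universal family then implies $\rho_E$ is monodromy on $X$. The extension to a Hodge isometry of $\wLambda$ is automatic from Markman's naturality statement, once integrality and monodromy are established, because $\wLambda$ is canonically determined by $H^2(X,\bZ)$ together with the action of the monodromy group on the discriminant lattice.

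For part (2), fix an ample class $h \in \cC_{\mathrm{mov}}^o$ and let $x$ be an arbitrary class in the positive cone. The locus where $\rho_E$ walls $E^\perp$ meet $\cC_{\mathrm{mov}}^o$ is locally finite, so after a generic perturbation the segment $[h,x]$ meets the walls $E^\perp$ transversely and away from their intersections. Walk along the segment: each time the path exits a region bounded by some $E^\perp$, apply $\rho_E$; this flips the ``wrong-sign'' wall and leaves the other walls meeting $[h,x]$ in the correct orientation. Because the BB-form on $H^2(X,\bR)_{\alg}$ has Lorentzian signature, the orbit $W_{\mathrm{Exc}} \cdot x$ is discrete in the positive cone, so the walk terminates at a point of $\overline{\cC_{\mathrm{mov}}^o}$. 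This produces $w \in W_{\mathrm{Exc}}$ with $wx \in \overline{\cC_{\mathrm{mov}}^o}$, proving that the orbit meets the closure of the movable chamber. For uniqueness one invokes the general fact that a Coxeter group acts simply transitively on its chambers together with the characterization of the walls of $\cC_{\mathrm{mov}}^o$ as orthogonals of prime exceptional divisors (Hassett-Tschinkel, Markman).

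The main obstacle I expect is the case analysis underlying integrality of $\rho_E$, and in particular verifying that $\dv(E)$ is always large enough relative to $(E,E)$; this requires Markman's detailed analysis of the discriminant form on $H^2(X,\bZ) \subset \wLambda$, and of how prime exceptional divisors are constrained by the wall-crossing formalism on moduli spaces. The other serious subtlety is ensuring that one can deform any prime exceptional divisor on an arbitrary deformation class representative to a corresponding divisor on a moduli-space model without losing its exceptionality, which uses the fact that the Hodge locus of $E$ in the moduli of marked pairs is connected.
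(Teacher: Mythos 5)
The paper does not actually prove this statement: it is attributed to Markman, and the ``proof'' given is a pointer to \cite[Corollary 3.6]{MarkMov} for part (1) and to \cite[Lemma 6.22]{MarkSurv} (together with \cite[Theorem 6.18]{MarkSurv} to reconcile the definitions of $W_{\mathrm{Exc}}$) for part (2). Your sketch is therefore an attempt to reprove Markman's theorems from scratch, and in broad outline it does track his actual arguments: control of the pair $\left((E,E),\dv(E)\right)$ for integrality, deformation to well-understood models for the monodromy statement, and a Weyl-chamber walk for the fundamental-domain statement. But as a proof it has concrete gaps.

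First, part (1) is asserted for an arbitrary irreducible holomorphic symplectic manifold, while your integrality argument leans on the wall classification for manifolds of K3 type; Markman's argument instead analyzes the generic fibre of the contraction of a prime exceptional divisor (via Druel, Wierzba, Kaledin, Namikawa) and deduces the constraint on $\left((E,E),\dv(E)\right)$ from the fact that this fibre is a single $\bP^1$ or a pair of incident $\bP^1$'s. Your sample list is also incorrect: the Hilbert--Chow divisor on $S^{[n]}$ has class $2\delta$ with $(2\delta,2\delta)=-8(n-1)$ and divisibility $4(n-1)$, not $(-2,1)$ (classes of square $-2$ and divisibility $1$ arise from Brill--Noether type contractions). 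Second, and more seriously, the walk argument in part (2) presupposes the hardest inputs: that the walls of $\cC_{\mathrm{mov}}^o$ are exactly the hyperplanes $E^\perp$ for stably prime exceptional classes, that this hyperplane arrangement is locally finite in the positive cone, and that no reflection hyperplane of $W_{\mathrm{Exc}}$ --- including those of conjugates of the generators --- meets the interior of $\cC_{\mathrm{mov}}^o$. These facts rest on the Global Torelli theorem and Markman's Hodge-theoretic characterization of stably prime exceptional classes; they are not formal consequences of Coxeter theory, and without them neither the termination of your walk nor the simple-transitivity step is available. Finally, the deformation-invariance of prime exceptionality along the Hodge locus of $E$, which you correctly flag as a subtlety, is genuinely nontrivial and occupies a substantial part of \cite{MarkMov}; flagging it does not discharge it.
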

\begin{proof}
These results are reviewed in \cite[Section 6]{MarkSurv}. The first statement was originally proved
in \cite[Corollary 3.6]{MarkMov}. The second statement is \cite[Lemma 6.22]{MarkSurv}. (Note that
the definition of $W_{\mathrm{Exc}}$ in \cite[Definition 6.8]{MarkSurv} is slightly different to the
one given above; by \cite[Theorem 6.18, part (3)]{MarkSurv} they are equivalent.)
\end{proof}

\begin{cor} \label{corweylgroup}
Let $R \in H_2(X)$ be an algebraic class with $\left(R, R\right) < 0$. Then there exists a birational model
$X'$ of $X$, and a parallel transport operator
$\psi \colon H^2(X) \to H^2(X')$ such that one of the two following conditions hold:
\begin{enumerate}
\item $\psi(R)$ generates an extremal ray of the Mori cone. \label{enum:extreme}
\item Neither $\psi(R)$ nor $-\psi(R)$ is in the Mori cone. \label{enum:noneffective}
\end{enumerate}
In either case $X'$ admits a K\"ahler class $\kappa$ with $\kappa.\psi(R)>0$.  
\end{cor}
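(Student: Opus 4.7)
The plan is to apply Theorem~\ref{thm:Weylgroup} in order to extend the wall-and-chamber decomposition of $\cC_{\mathrm{mov}}^o$ (by nef cones of birational models, from \cite{HT2}) to a decomposition of the entire positive cone in $H^2(X,\bR)_{\mathrm{alg}}$. Since every reflection $\rho_E \in W_{\mathrm{Exc}}$ is itself a parallel transport operator by part~(1) of Theorem~\ref{thm:Weylgroup}, each chamber of this extended decomposition has the form $\psi^{-1}(\mathrm{Nef}(Y))$ for some birational model $Y$ of $X$ and some parallel transport operator $\psi\colon H^2(X) \to H^2(Y)$. The walls of the decomposition are the hyperplanes $E^\perp$, where $E$ ranges over classes of irreducible exceptional divisors on birational models of $X$ (giving the outer walls of the various movable cones) together with classes of extremal flopping curves (giving the walls inside each movable cone). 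Because $(R,R)<0$ and the Beauville-Bogomolov form has signature $(1,\rho(X)-1)$ on $H^2(X,\bR)_{\mathrm{alg}}$, the hyperplane $R^\perp$ meets the positive cone in a codimension-one real subcone of full dimension.

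If $R^\perp$ coincides with a wall of the decomposition, then it is a common facet of two adjacent chambers. The plan is to pick the chamber $\cC$ that lies in the half-space $\{D \colon (D,R) > 0\}$ and let $\psi\colon H^2(X) \to H^2(X')$ be the associated parallel transport operator, so that $\psi(\cC) = \mathrm{Nef}(X')$. Since parallel transport preserves the BB pairing, $\mathrm{Nef}(X')$ is contained in $\{D' \colon (D', \psi(R)) \ge 0\}$, so $\psi(R)$ lies in the cone dual to $\mathrm{Nef}(X')$, i.e.~in the Mori cone of $X'$; moreover $\psi(R)^\perp \cap \mathrm{Nef}(X')$ is a facet, so $\psi(R)$ spans an extremal ray, yielding case~(1). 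Any class in the interior of $\mathrm{Nef}(X')$ then provides the required K\"ahler class.

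If instead $R^\perp$ does not coincide with any wall, then by local finiteness of the decomposition the hyperplane $R^\perp$ must pass through the interior of some chamber $\cC$. Choosing $X'$ and $\psi$ corresponding to $\cC$, the K\"ahler cone of $X'$ then contains classes on both sides of $\psi(R)^\perp$; pick $\kappa_+, \kappa_-$ with $(\kappa_+, \psi(R)) > 0$ and $(\kappa_-, \psi(R)) < 0$. The second inequality forces $\psi(R)$ out of the Mori cone of $X'$ (which is dual to the nef cone), and symmetrically $-\psi(R)$ is out as well, yielding case~(2), while $\kappa_+$ is the required K\"ahler class.

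The main obstacle is justifying the extended tiling rigorously: combining part~(2) of Theorem~\ref{thm:Weylgroup} (which identifies $\cC_{\mathrm{mov}}^o$ as a fundamental domain of $W_{\mathrm{Exc}}$) with the nef-cone subdivision of the movable cone from \cite{HT2}, and characterizing all walls as the orthogonal complements of exceptional-divisor and flopping-curve classes described above. Local finiteness of the decomposition on compact subsets of the positive cone is also needed, in order to conclude that if $R^\perp$ is not itself a wall then it must cross the interior of some chamber.
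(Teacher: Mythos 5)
Your argument is correct and is essentially the paper's own proof, just organized in one step (tiling the whole positive cone by $W_{\mathrm{Exc}}$-translates of pulled-back nef chambers) rather than two (first using the Weyl group to move $R^\perp$ into the movable cone, then invoking the nef-chamber decomposition from \cite{HT2}); the ingredients---Theorem~\ref{thm:Weylgroup}, the chamber decomposition, and the fact that all the identifications involved are parallel transport operators---are the same, and the "obstacles" you flag at the end are exactly the facts the paper also takes from \cite{MarkSurv} and \cite{HT2} without further elaboration.
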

Note that $\psi$ may be non-trivial even when $X = X'$. 
\begin{proof}
The statement immediately follows from the following claim:
\emph{There exists $X', \psi$ such that the orthogonal complement $\psi(R)^\perp$ intersects
the nef cone in full dimension, and such that there exists an ample class $h$ with
$h.\psi(R) > 0$.}
Case \eqref{enum:extreme} corresponds to the case that $\psi(R)^\perp$ contains a wall of the nef
cone, and case \eqref{enum:noneffective} to the case that $\psi(R)^\perp$ intersects the interior.
Either way, we have a K\"ahler class $\kappa$ meeting $\psi(R)$ positively.

We first proof the claim with ``nef cone'' replaced by ``movable cone'' and
``ample class'' by ``movable class''. Since $(R, R) < 0$, the orthogonal complement $R^\perp$
intersects the positive cone; therefore, we can use the Weyl group action of $W_{\mathrm{Exc}}$
to force the intersection of $\psi(R)^\perp$ and the movable cone to be full-dimensional. In case
$\psi(R)^\perp$ contains a wall of the movable cone, $R$ is proportional to an irreducible
exceptional divisor $E^\perp$, and the reflection $\rho_E$ at $E$ can be used to ensure the second
condition.

Now we use the chamber decomposition of the movable cone, whose chambers are given by pull-backs
of nef cones of marked birational models (see \cite{HT2}): at least one of the closed chambers
 intersects $\psi(R)^\perp$ in full dimension, such that the interior lies on the side with positive
intersection with $\psi(R)$. The identification of $H^2$ of different birational models is
induced by a parallel transport operator. 
\end{proof}

To prove Theorem \ref{theo:main}, we will use the following facts:
\begin{itemize}
\item By assumption, there exists a deformation of $X$ to a Hilbert scheme $S^{[n]}$ of a projective K3
surface $S$; by the surjectivity of the Torelli map for K3 surface, we may further deform $S$ such 
that a given class in $\wLambda_X$ becomes algebraic in $H^*(S) \cong \wLambda_{S^{[n]}}$.
\item By \cite[Theorem 12.2]{BM2}, the main theorem holds for any moduli space $M_\sigma(v)$ of
$\sigma$-stable objects of given primitive Mukai vector $v$ on any projective K3 surface
(in particular, for any Hilbert scheme).
\item By \cite[Theorem 1.2]{BM2}, any birational model of $M_\sigma(v)$ is also a moduli space of
stable objects (with respect to a different stability condition), and in particular the main Theorem
holds.
\end{itemize}

We will prove Theorem \ref{theo:main} by deformation to the Hilbert scheme $X'$, followed by a second
deformation to a birational model $X''$ of $X'$ using Corollary \ref{corweylgroup}. By abuse of
notation, we will use the same letters $\phi, \psi$ to denote the parallel transport operators 
on $H^2, H_2$ and $\wLambda$ for the deformations from $X$ to $X'$, and from $X'$ to $X''$,
respectively. 

We first prove that the Mori cone of $(X,h)$ is contained in the cone described in Theorem
\ref{theo:main}.  Let $R$ be a generator of one of its extremal rays. 
Let $X'$ be a deformation-equivalent Hilbert scheme with parallel transport operator
$\phi$ that $\phi(R)$ is algebraic. 
We apply Corollary \ref{corweylgroup} to $\phi(R)$; thus there exists 
a birational model $X''$ of $X'$ such that $\psi \circ \phi(R)$ satisfies property
\eqref{enum:extreme} or
\eqref{enum:noneffective} as stated in the Corollary. By Proposition
\ref{prop:deform}, $\psi \circ \phi(R)$ is effective, excluding case \eqref{enum:noneffective};
thus $\psi \circ \phi(R)$ is extremal on $X''$. Since $X''$ is a moduli space of stable 
objects on a K3 surface, it is of the form $\theta^\vee(a)$ with $a$ as stated in the Theorem.
Since the Mori cone is generated by the positive cone and its extremal rays, this proves the claim.

Conversely, consider a class $R = \theta^\vee(a)$ where $a \in \wLambda_{X, \mathrm{alg}}$ satisfies
the assumptions in the Theorem. We may assume $(R, R) < 0$. Again we deform to a Hilbert scheme
$X'$ such that $\phi(R)$ (or, equivalently, $\phi(a)$) is algebraic, and 
apply Corollary \ref{corweylgroup} to $\phi(R)$. 
Let $R'':= \psi \circ \phi(R) \in H_2(X'') $ and $a'' := \psi \circ \phi(a) \in \wLambda_{X''}$ be the 
corresponding classes. By Theorem \cite[Theorem
12.2]{BM2}, the class $R''$ is effective; by the conclusion of the Corollary, it has to be extremal.
Thus we can apply Proposition \ref{prop:deform} to $R''$, and conclude that $R$ is effective.

This finishes the proof of Theorem \ref{theo:main}.

\begin{proof}[Proof of Proposition \ref{prop:length}]
In the case of moduli spaces of sheaves or Bridgeland-stable objects on a projective K3 surfaces,
the statement is proved in \cite[Proposition 12.6]{BM2}.
By the previous argument, there is a family $\pi: \cX \to B$ such that $\cX_{b_0}$ is a moduli space of
sheaves on a K3 surface with $[R]$ extremal, and such that $\cX_{b_1} \cong X$.
By \cite[Theorem 1.2]{BM2}, there exists a wall in the space of Bridgeland stability conditions
contracting $R$.  Let $R_0$ be the rational curve on $\cX_{b_0}$ in the ray $\bR_{\ge 0}[R]$ with
$(R_0, R_0) \ge -\frac{n+3}2$ given by \cite[Proposition 12.6]{BM2}.  The curve $R_0$ is a minimal
free curve in a generic fibre of the exceptional locus over $B$ (see \cite[Section 14]{BM2});
therefore, the deformation argument in Proposition \ref{prop:deform} applies directly to $R_0$
(rather than a multiple) and implies the conclusion.  \end{proof}

\bibliographystyle{alpha}
\bibliography{Moricones}
\end{document}